\definecolor{R}{rgb}{0.5, 0, 0}
\definecolor{B}{rgb}{0, 0, 0.5}
\newtheoremstyle{note}
{4 mm}
{1 mm}
{\itshape}
{}
{\bf\sffamily}
{.}
{.5em}
{}
\theoremstyle{note}
\newtheorem{thm}{Theorem}
\newtheorem{cor}[thm]{Corollary}
\newtheoremstyle{note2}
{4 mm}
{1 mm}
{}
{}
{\bf\sffamily}
{.}
{.5em}
{}
\theoremstyle{note2}
\DeclareMathOperator{\Id}{Id}
\DeclareMathOperator{\Ad}{Ad}
\DeclareMathOperator{\ad}{ad}
\DeclareMathOperator{\fa}{\mathfrak{a}}
\DeclareMathOperator{\fg}{\mathfrak{g}}
\DeclareMathOperator{\fk}{\mathfrak{k}}
\DeclareMathOperator{\fm}{\mathfrak{m}}
\DeclareMathOperator{\fp}{\mathfrak{p}}
\title{\textbf{Notes on a Lie algebraic relation}} \date{November 2016} \author{{\Large \textsc{St\'ephane Korvers}} \\ \hspace{1 mm} \vspace{-3 mm} \\ \small{Universit\'e du Luxembourg} \\ \hspace{1 mm} \vspace{-7 mm} \\ \small{FSTC, Unit\'e de Recherche en Math\'ematiques} \\ \hspace{1 mm} \vspace{-7 mm} \\ \small{rue Richard Coudenhove-Kalergi, 6} \\ \hspace{1 mm} \vspace{-7 mm} \\ \small{L-1359 Luxembourg, Grand Duchy of Luxembourg} \\ \hspace{1 mm} \vspace{-7 mm} \\ {\small \textit{E-mail:} korvers.s@gmail.com} \\ \hspace{1 mm} \vspace{-3 mm} \\ \small \texttt{Research supported by } \\ \hspace{1 mm} \vspace{-7.5 mm} \\ \small \texttt{the \emph{Fonds National de la Recherche},} \\ \hspace{1 mm} \vspace{-7.5 mm} \\ \small \texttt{\emph{FNR/AFR-Postdoc grant} 8960322.}} 
\begin{document}


\renewcommand{\proofname}{\textbf{Proof.}}
\renewcommand{\qedsymbol}{\Squaresteel}
\renewcommand{\labelitemi}{$\bullet$}

\pagestyle{fancy} 
\fancyhf{} 
\cfoot{\thepage}
\lhead{Notes on a Lie algebraic relation}
\rhead{S. Korvers}

\thispagestyle{empty}

\maketitle

\begin{center} \small
\textbf{Abstract}

\textit{We exhibit explicit orthogonal decompositions of every multidimensional restricted root space of a real semi-simple Lie algebra. Although this interesting relation is quite elementary, we did not find it in the literature. \\We then show a link between this result and a radiality property of smooth functions \\on $G$-homogeneous spaces when $G$ is a real semi-simple Lie group.}
\end{center} \normalsize

\vspace{3 mm}

General framework results on real semi-simple Lie algebras that are mentioned below are standard and can be found e.g. in the reference \cite[Chapter 6]{Kn02}.

Let $\fg$ be a real semi-simple Lie algebra and $\sigma$ a Cartan involution of $\fg$. We denote by $\fg = \fk \oplus \fp$ the corresponding Cartan decomposition with $\sigma = \Id_{\fk} \oplus -\Id_{\fp}$ and $\beta$ the Killing form of $\fg$. We then have $\beta\left(X, Y\right) = 0$ for each $X \in \fk$ and $Y \in \fp$. Let's consider $\fa$ an abelian Lie subalgebra of $\fg$ contained in $\fp$ and maximal for this property. For each linear form $\left[\lambda : \mathfrak{a} \rightarrow \mathbb{R}\right] \in \mathfrak{a}^\star$, we set $$\mathfrak{g}_\lambda := \left\{X \in \mathfrak{g} \,:\, \left[H, X\right] = \lambda\left(H\right) X \text{ for each } H \in \mathfrak{a}\right\} \subset \mathfrak{g}.$$ 
A linear form $\lambda \in \mathfrak{a}^\star \backslash \left\{0\right\}$ such that $\mathfrak{g}_\lambda$ is non trivial is called \emph{(restricted) root of $\fg$}. The set of all these roots is denoted by $\Sigma \subset \mathfrak{a}^\star$. For $\lambda \in \Sigma$, the subspace $\fg_\lambda$ is called \emph{(restricted) root space} of $\fg$.  
It is well known that the Lie algebra $\fg$ admits the \emph{root space decomposition} $$\mathfrak{g} \,=\, \mathfrak{g}_0 \,\oplus\, \left(\bigoplus_{\lambda \in \Sigma} \mathfrak{g}_\lambda\right)$$
and we have $\left[\mathfrak{g}_\lambda, \mathfrak{g}_\mu\right] \subset \mathfrak{g}_{\lambda + \mu}$ and $\mathfrak{g}_{-\lambda} = \sigma\left(\mathfrak{g}_\lambda\right)$ for each $\lambda, \mu \in \mathfrak{a}^\star$. In addition, the subspace $\fg_0$ is a Lie subalgebra of $\fg$ which admits a decomposition $$\fg_0 = \fa \oplus \fm \text{ \,\,with\,\,\, } \mathfrak{m} := \left\{X \in \fk : \left[H, X\right] = 0 \text{ for each } H \in \mathfrak{a}\right\}.$$
As $\beta$ is positive definite on $\fa \times \fa$, for $\lambda \in \fa^\star$, we can introduce $H_\lambda \in \fa$ as the unique element in $\fa$ such that $\beta\left(H_\lambda, H\right) = \lambda\left(H\right)$ for each $H \in \fa$. The set $\left\{H_\lambda \,:\, \lambda \in \Sigma\right\}$ spans $\fa$ and we have 
\begin{eqnarray}\label{1}
\left[X, \sigma\left(X\right)\right] = \beta\left(X, \sigma\left(X\right)\right) H_\lambda
\end{eqnarray}
for all $\lambda \in \Sigma$ and $X \in \fg_\lambda$. As a consequence, it is easy to remark that 
\begin{eqnarray}\label{1b}
\left[X, \sigma\left(Y\right)\right]_{\fm} := \left[X, \sigma\left(Y\right)\right] - \beta\left(X, \sigma\left(Y\right)\right) H_\lambda \,\in\, \fm
\end{eqnarray}
for each $\lambda \in \Sigma$ and $X, Y \in \fg_\lambda$. In fact, as we have $\left[X, \sigma\left(Y\right)\right]_{\fm} \in \fg_0 = \fa \oplus \fm$ where $\fa \subset \fp$ and $\fm \subset \fk$ orthogonal with respect to $\beta$, showing this assertion is equivalent to showing that $\beta\left(\left[X, \sigma\left(Y\right)\right]_{\fm}, H\right) = 0$ for each $H \in \fa$. This follows from the properties of the root space decomposition of $\fg$ and $\ad$-invariance of $\beta$ through the equalities
\begin{eqnarray}\nonumber
\beta\left(\left[X, \sigma\left(Y\right)\right] - \beta\left(X, \sigma\left(Y\right)\right) H_\lambda, H_\mu\right) &=& \beta\left(X, \left[\sigma\left(Y\right), H_\mu\right]\right) - \beta\left(X, \sigma\left(Y\right)\right) \beta\left(H_\lambda, H_\mu\right) \\ \nonumber &=& \beta\left(X, -\left(-\lambda\right)\left(H_\mu\right) \sigma\left(Y\right)\right) - \beta\left(X, \sigma\left(Y\right)\right) \lambda\left(H_\mu\right) \,\,\,=\,\,\, 0,
\end{eqnarray}
for an arbitrary root $\mu \in \Sigma$.

\noindent As the symmetric bilinear form $\beta^\sigma : \left(X, Y\right) \in \fg \times \fg \mapsto \beta^\sigma\left(X, Y\right) := - \beta\left(X, \sigma\left(Y\right)\right)$ is positive definite, the pair $\left(\fg_\lambda, \beta^\sigma\right)$ defines a Euclidian vector space for each $\lambda \in \Sigma$. We can now state our result. 

\begin{thm}\label{1}
Let's consider $\lambda \in \Sigma$ and $X \in \fg_\lambda\backslash\left\{0\right\}$. Then, we have $$\left[\fm, X\right] = X^{\perp\left(\lambda\right)} := \left\{Y \in \fg_\lambda : \beta^\sigma\left(X, Y\right) = 0\right\}.$$ In particular, the root space $\fg_\lambda$ admits the decomposition $\fg_\lambda = \mathbb{R} X \oplus \left[\fm, X\right]$.
\end{thm}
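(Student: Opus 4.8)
The plan is to prove the two inclusions $[\fm, X] \subseteq X^{\perp(\lambda)}$ and $X^{\perp(\lambda)} \subseteq [\fm, X]$ separately, and then deduce the direct sum decomposition by a dimension count.

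First I would establish $[\fm, X] \subseteq \fg_\lambda$: since $\fm \subseteq \fg_0$ and $[\fg_0, \fg_\lambda] \subseteq \fg_\lambda$ from the root space decomposition properties, any bracket $[Z, X]$ with $Z \in \fm$ lands in $\fg_\lambda$. Next, to see that $[\fm, X]$ is orthogonal to $X$ with respect to $\beta^\sigma$, I would use the $\ad$-invariance of $\beta$ together with the fact that $\sigma$ is a Lie algebra automorphism commuting with $\ad$ on $\fm$ (since elements of $\fm$ are $\sigma$-fixed): for $Z \in \fm$,
\begin{eqnarray}\nonumber
\beta^\sigma([Z,X], X) &=& -\beta([Z,X], \sigma(X)) \;=\; \beta(X, [Z, \sigma(X)]) \\ \nonumber &=& \beta(X, [\sigma(Z), \sigma(X)]) \;=\; \beta(X, \sigma([Z,X])) \;=\; -\beta^\sigma(X, [Z,X]).
\end{eqnarray}
By symmetry of $\beta^\sigma$ this forces $\beta^\sigma([Z,X],X) = 0$, so $[\fm, X] \subseteq X^{\perp(\lambda)}$.

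The reverse inclusion is the heart of the matter and the step I expect to be the main obstacle. Given $Y \in \fg_\lambda$ with $\beta^\sigma(X, Y) = 0$, I need to exhibit $Z \in \fm$ with $[Z, X] = Y$. The natural candidate is built from the bracket $[X, \sigma(Y)]$: by relation \eqref{1b}, the "$\fm$-component" $[X, \sigma(Y)]_\fm = [X, \sigma(Y)] - \beta(X, \sigma(Y)) H_\lambda$ lies in $\fm$, and since $\beta^\sigma(X,Y) = 0$ means $\beta(X, \sigma(Y)) = 0$, we in fact have $[X, \sigma(Y)] \in \fm$ already. So I would set $Z := c\,[X, \sigma(Y)]$ for a suitable scalar $c$ and try to show $[Z, X]$ is proportional to $Y$. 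Computing $[[X, \sigma(Y)], X]$ via the Jacobi identity in terms of $[X, [\sigma(Y), X]]$ and using that $[\sigma(Y), X] \in \fg_0$ decomposes along $\fa$ and $\fm$ should express this bracket as a combination involving $Y$ (through the $\fa$-part acting on $X \in \fg_\lambda$) and possibly $X$ (through the $\fm$-part). The orthogonality hypothesis $\beta^\sigma(X,Y)=0$ is precisely what should kill the unwanted $X$-component, leaving $[[X,\sigma(Y)],X]$ proportional to $Y$ with a nonzero proportionality constant expressible via $\beta^\sigma(X,X) \neq 0$, $\lambda(H_\lambda) = |\lambda|^2 > 0$, and $\beta^\sigma(Y,Y)$; normalizing then gives the result. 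The delicate point is controlling these constants and ensuring the coefficient is genuinely nonzero — this is where I would need to be careful, handling separately the trivial case $Y = 0$ and using the positive-definiteness of $\beta^\sigma$ and the sign of $\lambda(H_\lambda)$.

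Finally, for the decomposition $\fg_\lambda = \mathbb{R}X \oplus [\fm, X]$: the two summands intersect trivially since $X \notin X^{\perp(\lambda)}$ (as $\beta^\sigma(X,X) > 0$) while $[\fm, X] \subseteq X^{\perp(\lambda)}$, and they span $\fg_\lambda$ because in the Euclidean space $(\fg_\lambda, \beta^\sigma)$ we have $\fg_\lambda = \mathbb{R}X \oplus X^{\perp(\lambda)}$ and the first part of the theorem identifies $X^{\perp(\lambda)}$ with $[\fm, X]$. This last paragraph is routine once the equality $[\fm, X] = X^{\perp(\lambda)}$ is in hand.
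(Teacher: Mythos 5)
Your first inclusion is correct (and slightly cleaner than the paper's, which instead computes $\beta(X,\sigma([X,Y]))=\beta([X,\sigma(X)],Y)=\beta(X,\sigma(X))\beta(H_\lambda,Y)=0$ using relation (1) and the orthogonality of $\fa$ and $\fm$), and your concluding paragraph on the direct sum is routine, as you say. The gap is in the reverse inclusion, at exactly the step you flag as delicate, and it is not a matter of tracking constants: the proportionality you are counting on is false in general. Set $\Phi(Y):=[[X,\sigma(Y)],X]$, so that your candidate yields $[Z,X]=c\,\Phi(Y)$. By $\ad$-invariance, $\beta^\sigma(\Phi(Y),Y')=-\beta([X,\sigma(Y)],[X,\sigma(Y')])$ for $Y,Y'\in X^{\perp(\lambda)}$; since both brackets then lie in $\fm\subset\fk$, where $-\beta$ is positive definite, $\Phi$ is a symmetric positive semi-definite endomorphism of $(X^{\perp(\lambda)},\beta^\sigma)$ with image contained in $[\fm,X]$ --- but nothing forces it to be a scalar multiple of the identity. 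It commutes with the action of the stabilizer of $X$ in $\fm$, so it is scalar on each irreducible component of $X^{\perp(\lambda)}$ under that stabilizer, and these scalars genuinely differ once $2\lambda\in\Sigma$. Concretely, for $\fg=\mathfrak{su}(n,1)$ with $n\geq 3$ and $\lambda$ the shorter restricted root, $\fg_\lambda\cong\mathbb{C}^{n-1}$, $\fm\cong\mathfrak{u}(n-1)$, and $X^{\perp(\lambda)}=i\mathbb{R}X\oplus(\mathbb{C}X)^{\perp}$ splits into two inequivalent pieces on which a direct computation (sensitive to the Killing form's non-standard weighting of the central direction of $\mathfrak{u}(n-1)$) gives distinct eigenvalues for $\Phi$. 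So $[c[X,\sigma(Y)],X]$ is indeed an element of $[\fm,X]$, but it is not the element $Y$ you must produce. Your heuristic does close up when $2\lambda\notin\Sigma$ (e.g. $\fg=\mathfrak{so}(n,1)$), which is why it looks plausible.

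What your strategy actually needs is injectivity of $Y\mapsto[X,\sigma(Y)]$ on $X^{\perp(\lambda)}$: granting that, $\Phi$ is a self-adjoint automorphism of $X^{\perp(\lambda)}$ whose image sits inside $[\fm,X]\subseteq X^{\perp(\lambda)}$, which forces equality without any proportionality claim. That injectivity is the real content and is precisely where the presence of $\fg_{2\lambda}$ must be confronted. The paper takes a different route: it writes $X'=\lambda(H_\lambda)^{-1}[H_\lambda,X']$, expands $H_\lambda$ as $\beta(X,\sigma(X))^{-1}[X,\sigma(X)]$ via the Jacobi identity, isolates the troublesome term $X''=[[X,X'],\sigma(X)]$ coming from $\fg_{2\lambda}$, iterates the identity once more, and uses $3\lambda\notin\Sigma$ to solve for $X''$ and absorb it into a corrected element $Z\in X^{\perp(\lambda)}$ with $[\sigma(X),Z]\in\fm$ and $[X,[\sigma(X),Z]]$ a nonzero multiple of $X'$. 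Either that iteration, or an $\mathfrak{sl}(2)$-representation-theoretic argument showing that no nonzero element of $\fg_{-\lambda}$ is annihilated by $\ad_X$, would repair your proof; as written, the construction does not establish the reverse inclusion.
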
 

\begin{proof}
For each $Y \in \fm$, by using successively the $\ad$-invariance of $\beta$, relation (\ref{1}) and the orthogonality of $\fa \subset \fp$ and $\fm \subset \fk$, we get
\begin{eqnarray}\nonumber
\beta\left(X, \sigma\left(\left[X, Y\right]\right)\right) = \beta\left(\left[X, \sigma\left(X\right)\right], Y\right) = \beta\left(X, \sigma\left(X\right)\right) \beta\left(H_\lambda, Y\right) = 0.
\end{eqnarray}
Therefore, the inclusion $\left[\fm, X\right] \subseteq X^{\perp\left(\lambda\right)}$ is immediate. In order to show the reverse inclusion, let's consider an arbitrary element $X^\prime \in X^{\perp\left(\lambda\right)}$. From relation (\ref{1}) and the Jacobi identity, we get \begin{eqnarray} \label{p13} X^\prime = \left(\frac{1}{\lambda\left(H_\lambda\right)}\right) \left[H_\lambda, X^\prime\right] = \left(\frac{1}{\lambda\left(H_\lambda\right) \beta\left(X, \sigma\left(X\right)\right)}\right) \left(\left[\left[X, X^\prime\right], \sigma\left(X\right)\right] + \left[X, \left[\sigma\left(X\right), X^\prime\right]\right]\right).\end{eqnarray}
We set $X^{\prime\prime} := \left[\left[X, X^\prime\right], \sigma\left(X\right)\right]$. It is clear that $X^{\prime\prime} \in X^{\perp\left(\lambda\right)}$. In particular, if we iterate the development above with $X^{\prime\prime}$, we have
\begin{eqnarray}\label{p11} X^\prime = \left(\frac{1}{\lambda\left(H_\lambda\right) \beta\left(X, \sigma\left(X\right)\right)}\right) \left(\left(\frac{\left[\left[X, X^{\prime\prime}\right], \sigma\left(X\right)\right] + \left[X, \left[\sigma\left(X\right), X^{\prime\prime}\right]\right]}{\lambda\left(H_\lambda\right) \beta\left(X, \sigma\left(X\right)\right)}\right) + \left[X, \left[\sigma\left(X\right), X^\prime\right]\right]\right).\end{eqnarray}
Again, from relation (\ref{1}) and the Jacobi identity, we can deduce \begin{eqnarray} \label{p12}[X, X^{\prime\prime}] = \left[\left[X, X^\prime\right], \left[X, \sigma\left(X\right)\right]\right] = - 2 \lambda\left(H_\lambda\right) \beta\left(X, \sigma\left(X\right)\right) \left[X, X^\prime\right]\end{eqnarray} given that $3 \lambda \notin \Sigma$. As a consequence, by combining (\ref{p11}) with (\ref{p12}), we obtain
\begin{eqnarray} \label{p14} X^\prime = \left(\frac{1}{\lambda\left(H_\lambda\right) \beta\left(X, \sigma\left(X\right)\right)}\right) \left(\left(-2 X^{\prime\prime} + \frac{\left[X, \left[\sigma\left(X\right), X^{\prime\prime}\right]\right]}{\lambda\left(H_\lambda\right) \beta\left(X, \sigma\left(X\right)\right)}\right) + \left[X, \left[\sigma\left(X\right), X^\prime\right]\right]\right).\end{eqnarray}
If we compare (\ref{p13}) and (\ref{p14}), we get easily $3 \lambda\left(H_\lambda\right) \beta\left(X, \sigma\left(X\right)\right) X^{\prime\prime} = \left[X, \left[\sigma\left(X\right), X^{\prime\prime}\right]\right]$, and then $$X^\prime = \left(\frac{1}{\lambda\left(H_\lambda\right) \beta\left(X, \sigma\left(X\right)\right)}\right) \left[X, \left[\sigma\left(X\right), Z\right]\right] \text{\,\, where \,\,} Z := \left(\frac{1}{3 \lambda\left(H_\lambda\right) \beta\left(X, \sigma\left(X\right)\right)}\right) X^{\prime\prime} + X^\prime \in X^{\perp\left(\lambda\right)}.$$
Given that $\beta\left(Z, \sigma\left(X\right)\right) = 0$, it follows from (\ref{1b}) that $\left[\sigma\left(X\right), Z\right] \in \fm$ and the proof is complete.
\end{proof}

\noindent The following corollary can be deduced trivially from the previous theorem.

\begin{cor}
\hspace{1 mm} \newline $(a)$ \,\, If the root $\lambda \in \Sigma$ is such that $\dim\left(\fg_\lambda\right) = 1$, then $\left[\fm, X\right] = 0$. \newline $(b)$ \,\, If the Lie algebra $\fg$ is such that $\fm = 0$, all the root spaces of $\fg$ are one-dimensional.
\end{cor}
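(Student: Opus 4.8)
The plan is to read both statements off directly from the previous theorem, using nothing beyond the linear algebra of the Euclidean space $\left(\fg_\lambda, \beta^\sigma\right)$; there is no new Lie-theoretic input required.

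For part $(a)$ I would argue as follows. If $X = 0$, the claim $\left[\fm, X\right] = 0$ is trivial, so assume $X \in \fg_\lambda \backslash \left\{0\right\}$ and invoke the theorem to identify $\left[\fm, X\right]$ with $X^{\perp\left(\lambda\right)} = \left\{Y \in \fg_\lambda : \beta^\sigma\left(X, Y\right) = 0\right\}$. Since $\beta^\sigma$ is positive definite on $\fg_\lambda$ and $\dim\left(\fg_\lambda\right) = 1$, writing any $Y \in \fg_\lambda$ as $Y = t X$ with $t \in \mathbb{R}$ gives $\beta^\sigma\left(X, Y\right) = t\,\beta^\sigma\left(X, X\right)$ with $\beta^\sigma\left(X, X\right) > 0$, so $\beta^\sigma\left(X, Y\right) = 0$ forces $Y = 0$. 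Hence $X^{\perp\left(\lambda\right)} = 0$, and therefore $\left[\fm, X\right] = 0$ for every $X \in \fg_\lambda$.

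For part $(b)$ I would fix an arbitrary $\lambda \in \Sigma$; by the very definition of a restricted root, $\fg_\lambda$ is non-trivial, so one may choose $X \in \fg_\lambda \backslash \left\{0\right\}$. The decomposition furnished by the theorem then reads $\fg_\lambda = \mathbb{R} X \oplus \left[\fm, X\right]$, and the hypothesis $\fm = 0$ kills the second summand, leaving $\fg_\lambda = \mathbb{R} X$. Thus $\dim\left(\fg_\lambda\right) = 1$ for every root $\lambda \in \Sigma$.

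I do not expect any genuine obstacle here: both assertions are immediate once the theorem is available, the only point deserving a word being the degenerate case $X = 0$ in $(a)$, which falls outside the hypothesis of the theorem but is disposed of at once on its own. In short, the whole substance of the corollary is already contained in the result it follows, which is precisely why the deduction is described as trivial.
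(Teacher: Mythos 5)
Your proof is correct and is precisely the ``trivial deduction'' the paper intends: part $(a)$ is the observation that the orthogonal complement of a nonzero vector in a one-dimensional Euclidean space is zero, and part $(b)$ reads $\dim\left(\fg_\lambda\right) = 1$ off the decomposition $\fg_\lambda = \mathbb{R} X \oplus \left[\fm, X\right]$ when $\fm = 0$. The paper gives no written proof, so there is nothing to compare beyond noting that your handling of the degenerate case $X = 0$ in $(a)$ is a sensible extra precaution.
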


Let's consider $G$ a connected Lie groups with Lie algebra $\fg$ and $K \subset G$ a Lie subgroup of $G$ with Lie algebra $\fk$. We denote by $\mathcal{C}^\infty\left(G/K\right)$ the set of real valued smooth function on the connected homogeneous space $G/K$. It is well known that the Lie group $G$ acts differentially and transitively on $G/K$ via the action $G \times G/K \rightarrow G/K : \left(g, g^\prime K\right) \mapsto \left(gg^\prime\right) K$. In what follows, for $X \in \fg$, the notation $X^\star \in \Gamma\left(T\left(G/K\right)\right)$ will refers to the \emph{fundamental vector field} associated with $X$ which is defined at point $gK \in G/K$ by $$X^\star_{gK} := \left.\frac{d}{dt}\right|_{t=0} \left(\exp\left(- t X\right)g\right) K.$$

\begin{cor}
Let's consider $f \in \mathcal{C}^\infty\left(G/K\right)$ such that $X^\star f = 0$ for all $X \in \fm$. Then, for each $\lambda \in \Sigma$, the function $$f_\lambda : \fg_\lambda \rightarrow \mathbb{R} : X \mapsto f\left(\exp\left(X\right) K\right)$$ depends only on $\beta^\sigma\left(X, X\right)$. In particular, the function $f_\lambda$ is radial in the Euclidian vector space $\left(\fg_\lambda, \beta^\sigma\right)$.
\end{cor}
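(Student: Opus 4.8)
The plan is to fix $\lambda \in \Sigma$ and to prove that $f_\lambda$ is constant on each level set $S_c := \left\{X \in \fg_\lambda : \beta^\sigma\left(X, X\right) = c\right\}$, $c \geq 0$; a function with this property depends only on $\beta^\sigma\left(X, X\right)$ and is, by definition, radial in the Euclidian space $\left(\fg_\lambda, \beta^\sigma\right)$, while $S_0 = \left\{0\right\}$ is trivial.

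First I would globalize the hypothesis. For $Z \in \fm$ and $g \in G$ the function $s \mapsto f\left(\exp\left(sZ\right) g K\right)$ has derivative $-\left(Z^\star f\right)\left(\exp\left(sZ\right) g K\right) = 0$, hence is constant; since the one-parameter subgroups $\exp\left(\mathbb{R}Z\right)$, $Z \in \fm$, generate the connected subgroup $M_0 \subseteq K$ with Lie algebra $\fm$, this gives $f\left(m g K\right) = f\left(g K\right)$ for all $m \in M_0$, $g \in G$. Next, because $\fm$ centralizes $\fa$, the operator $\Ad\left(\exp\left(tZ\right)\right) = e^{t \ad Z}$ commutes with every $\ad H$, $H \in \fa$, and therefore preserves each root space $\fg_\lambda$; moreover $$\exp\left(\Ad\left(\exp\left(tZ\right)\right) X\right) K = \exp\left(tZ\right) \exp\left(X\right) \exp\left(-tZ\right) K = \exp\left(tZ\right) \exp\left(X\right) K$$ for $X \in \fg_\lambda$ and $t \in \mathbb{R}$, the last equality because $\exp\left(-tZ\right) \in K$. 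Combining this with the left $M_0$-invariance of $f$ yields $f_\lambda\left(\Ad\left(\exp\left(tZ\right)\right) X\right) = f_\lambda\left(X\right)$, i.e. $f_\lambda$ is constant along the $\Ad\left(M_0\right)$-orbits in $\fg_\lambda$.

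Then I would differentiate this identity at $t = 0$: since $\left.\tfrac{d}{dt}\right|_{t=0} \Ad\left(\exp\left(tZ\right)\right) X = \left[Z, X\right]$, we obtain $\left(d f_\lambda\right)_X\left(\left[Z, X\right]\right) = 0$ for every $Z \in \fm$, that is, $\left(d f_\lambda\right)_X$ annihilates $\left[\fm, X\right]$. This is precisely where the theorem enters: for $X \neq 0$ it identifies $\left[\fm, X\right]$ with $X^{\perp\left(\lambda\right)}$, and $X^{\perp\left(\lambda\right)}$ is exactly the tangent space at $X$ to the level set $S_c$ through $X$, since $S_c$ is the fibre of the quadratic map $Y \mapsto \beta^\sigma\left(Y, Y\right)$, whose differential at $X$ is $Y \mapsto 2\, \beta^\sigma\left(X, Y\right)$. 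Hence $f_\lambda|_{S_c}$ has vanishing differential at every point and is locally constant on $S_c$.

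The step I expect to be the main obstacle is the passage from locally constant to constant: it requires $S_c$ to be connected, which holds exactly when $\dim \fg_\lambda \geq 2$, the multidimensional situation that is the point of the note. In that case $f_\lambda$ is constant on $S_c$ for every $c > 0$, so $f_\lambda\left(X\right)$ depends only on $\beta^\sigma\left(X, X\right)$ and $f_\lambda$ is radial, as claimed. (When $\dim \fg_\lambda = 1$ one has $\left[\fm, X\right] = 0$ by part $(a)$ of the preceding corollary and $S_c$ is a two-point set, the degenerate case of the statement.)
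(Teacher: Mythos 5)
Your proof is correct and follows essentially the same route as the paper: both arguments reduce to showing that $\left(df_\lambda\right)_X$ annihilates $\left[\fm, X\right]$ (you do this by first integrating to $M_0$-invariance of $f$ and then differentiating; the paper differentiates the fundamental vector field directly at $\exp\left(Y\right)K$), and then invoke Theorem \ref{1} to identify $\left[\fm, X\right]$ with $X^{\perp\left(\lambda\right)}$, the tangent space to the level set of $\beta^\sigma\left(\cdot,\cdot\right)$. The one place you go beyond the paper --- insisting that the spheres $S_c$ be connected, hence $\dim \fg_\lambda \geq 2$ --- is a genuine and warranted observation rather than pedantry: the paper's ``the result follows'' hides exactly this step, and for a one-dimensional root space the statement actually fails (e.g. $\fg = \mathfrak{sl}\left(2,\mathbb{R}\right)$ has $\fm = 0$, so the hypothesis on $f$ is vacuous, yet $t \mapsto f\left(\exp\left(tE\right)K\right)$ need not be even), so the corollary must be read, consistently with the abstract, as a statement about multidimensional root spaces.
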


\begin{proof}
Let's fix $\lambda \in \Sigma$. For each $X \in \fm \subset \fk$ and $Y \in \fg_\lambda$, given that $\exp\left(X\right) \in K$, we have 
\begin{eqnarray} \nonumber 0 \,\,\,\, = \,\,\,\, \left(X^\star f\right)\left(\exp\left(Y\right) K\right) &=& \left.\frac{d}{dt}\right|_{t=0} f\left(\left(\exp\left(- t X\right) \exp\left(Y\right) \exp\left(t X\right)\right) K\right) \\ \nonumber &=& \left.\frac{d}{dt}\right|_{t=0} f_\lambda \left(\Ad_{\exp\left(- t X\right)}\left(Y\right)\right) \\ \nonumber &=& {{f_\lambda}_\star}_Y \left(\left[Y, X\right]\right).
\end{eqnarray}
As a consequence, in view of theorem \ref{1}, the map ${{f_\lambda}_\star}_Y$ vanishes on $Y^{\perp\left(\lambda\right)}$ for each $Y \in \fg_\lambda$ and the result follows.
\end{proof}

\end{document}